\documentclass[11pt,reqno]{amsart}
\usepackage[margin=2.9cm]{geometry}
\usepackage[T1]{fontenc}
\usepackage{amsmath,amssymb,amsthm}
\usepackage{xcolor}
\usepackage{hyperref}
\hypersetup{colorlinks=true,linkcolor=blue!50!black,citecolor=blue!50!black,urlcolor=blue!50!black,
  pdfauthor={Hikmet Burak \"Ozcan}}

\newtheorem{theorem}{Theorem}[section]
\newtheorem{lemma}[theorem]{Lemma}
\theoremstyle{definition}
\newtheorem*{problemstar}{Problem}
\theoremstyle{remark}
\newtheorem{remark}[theorem]{Remark}

\numberwithin{equation}{section}

\DeclareMathOperator{\spec}{sp}
\newcommand{\Z}{\mathbb{Z}}
\newcommand{\Q}{\mathbb{Q}}
\newcommand{\N}{\mathbb{N}}

\title[The spectrum of $(\xi\alpha^n)$ can be uncountable]{The spectrum of $(\xi\alpha^n)$ can be uncountable}

\author{H\.IKMET BURAK \"OZCAN}
\address{Department of Mathematics, \.Izmir Institute of Technology, \.Izmir, Turkey}

\subjclass[2020]{11K06 (Primary), 11K16 (Secondary)}
\keywords{Uniform distribution modulo one, spectrum, exponential sequences, Diophantine approximation}

\date{}

\begin{document}

\begin{abstract}
In this note, we give a counterexample to the assertion of Problem~10.4 in Bugeaud's monograph \emph{Distribution modulo one and Diophantine approximation}, which goes back to Mend\`es France. The problem states that the spectrum of the sequence $(\xi\alpha^n)_{n\ge1}$, that is, the set of irrational $\theta\in(0,1)$ for which $(\xi\alpha^n-n\theta)_{n\ge1}$ is not uniformly distributed modulo one, is at most countable for all real $\xi\ne0$ and $\alpha>1$. More precisely, we prove that for every real $\alpha>1$ there are $2^{\aleph_0}$ real numbers $\xi>0$ for which the spectrum of $(\xi\alpha^n)_{n\ge1}$ contains one and the same uncountable set.
\end{abstract}

\maketitle

\section{Introduction}

A sequence $(x_n)_{n\ge1}$ of real numbers is \emph{uniformly distributed modulo one} (u.d.\ mod~1) if
\begin{equation}\label{eq:ud}
\frac1N\,\#\bigl\{1\le n\le N:\{x_n\}\in[a,b)\bigr\}\longrightarrow b-a\qquad(N\to\infty)
\end{equation}
for all $0\le a<b\le1$, where $\{t\}$ is the fractional part of $t$. We also write $\|t\|$ for the distance from $t$ to the nearest integer. The \emph{spectrum} of a sequence $(x_n)_{n\ge1}$ was introduced by Mend\`es France \cite{MFa}. Following Bugeaud \cite{Bu}, we define it as
\[
\spec\bigl((x_n)_{n\ge1}\bigr)=\bigl\{\theta\in(0,1)\setminus\Q:\ (x_n-n\theta)_{n\ge1}\ \text{is not u.d.\ mod }1\bigr\}.
\]

The following problem was posed by Mend\`es France in \cite{MF} and recorded by Bugeaud in his monograph. It was recently recalled as open by Chen, Ye and Zheng \cite{CYZ}.

\begin{problemstar}[{\cite[Problem~10.4]{Bu}}]
Let $\xi$ be a non-zero real number and $\alpha>1$ be a real number. The spectrum of the sequence $(\xi\alpha^n)_{n\ge1}$ is at most countable.
\end{problemstar}

The assertion looks plausible. In fact, in the following elementary cases, the spectrum is empty. If $\xi$ and $\alpha$ are integers, then $\xi\alpha^n-n\theta\equiv-n\theta\pmod1$, and $(-n\theta)_{n\ge1}$ is u.d.\ mod~1 because $\theta$ is irrational. If $\xi\in\Q$ and $\alpha\in\Z$, then $\alpha^n$ is eventually periodic modulo the denominator of $\xi$, so that $\{\xi\alpha^n\}$ is eventually periodic, say with period $M$. Hence, on each residue class $n\equiv r\pmod M$ the sequence $(\xi\alpha^n-n\theta)_{n\ge1}$ eventually differs from $(-n\theta)_{n\ge1}$ by a constant modulo one, and so it is u.d.\ mod~1 along each of these $M$ arithmetic progressions because $M\theta$ is irrational. Consequently, it is u.d.\ mod~1. If $\alpha$ is a Pisot number and $\xi=1$, then $\|\alpha^n\|\to0$. Let $a_n$ be the nearest integer to $\alpha^n$, so that $\alpha^n-a_n\to0$. The sequence $(a_n-n\theta)_{n\ge1}$ has the same fractional parts as $(-n\theta)_{n\ge1}$ and is therefore u.d.\ mod~1, and hence so is $(\alpha^n-n\theta)_{n\ge1}$ by \cite[Chapter~1, Theorem~1.2]{KN}. Moreover, there are two general facts which support the assertion.

First, the spectrum of an arbitrary real sequence $(x_n)_{n\ge1}$ is a Lebesgue null set. This was already observed by Mend\`es France \cite{MFa}, and we recall the argument. Fix an integer $h\ne0$. Since the functions $\theta\mapsto e^{-2\pi ihn\theta}$ with $1\le n\le N$ are orthonormal on $[0,1]$, we have
\[
\int_0^1\Bigl|\frac1N\sum_{n=1}^Ne^{2\pi ih(x_n-n\theta)}\Bigr|^2\,d\theta=\frac1N .
\]
As this holds for every $h\ne0$ and $\sum_{N\ge1}N^{-1}\cdot N^{-1}<\infty$, the hypothesis of the criterion of Davenport, Erd\H os and LeVeque \cite{DEL} (see also \cite{KN}) is satisfied, and the criterion implies that $(x_n-n\theta)_{n\ge1}$ is u.d.\ mod~1 for almost every $\theta$.

Second, for our sequences the null set can be taken in the variable $\xi$. By van der Corput's difference theorem \cite{KN}, it is enough to show that
\[
\bigl(\xi\alpha^{n+h}-\xi\alpha^n-h\theta\bigr)_{n\ge1}=\bigl(\xi(\alpha^h-1)\alpha^n-h\theta\bigr)_{n\ge1}
\]
is u.d.\ mod~1 for every $h\ge1$. Since uniform distribution does not change when we add the constant $-h\theta$, the exceptional set of $\xi$ does not depend on $\theta$. Moreover, for a fixed $h$, this set is the preimage of $\{\zeta:(\zeta\alpha^n)_{n\ge1}\ \text{is not u.d.\ mod }1\}$ under the dilation $\xi\mapsto\xi(\alpha^h-1)$. Since $(\zeta\alpha^n)_{n\ge1}$ is u.d.\ mod~1 for almost every $\zeta$ by \cite[Corollary~1.9]{Bu}, and since the dilation preserves null sets as $\alpha^h\ne1$, this set is null. As a countable union of null sets is null, we obtain $\spec((\xi\alpha^n)_{n\ge1})=\emptyset$ for almost every $\xi$, which shows that a counterexample has to be exceptional in two ways, once in $\xi$ and once in $\theta$. On the other hand, Mend\`es France also notes in \cite[Probl\`eme~5]{MF} that the spectrum of a general sequence can be uncountable, and the question is whether this happens for the sequences $(\xi\alpha^n)_{n\ge1}$. It does, and the assertion is false.

\begin{theorem}\label{thm:main}
Let $\alpha>1$ be a real number. Then, there exist an uncountable set $C$ of irrational numbers in $(0,1)$ and a set $\Xi_\alpha\subset(0,\infty)$ of cardinality $2^{\aleph_0}$ such that
\[
C\ \subseteq\ \spec\bigl((\xi\alpha^n)_{n\ge1}\bigr)\qquad\text{for every }\xi\in\Xi_\alpha .
\]
In particular, the spectrum of $(\xi\alpha^n)_{n\ge1}$ is uncountable for every $\xi\in\Xi_\alpha$, and so Problem~\textup{10.4} has a negative answer.
\end{theorem}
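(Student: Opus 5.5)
Fix $\alpha>1$. The idea is to make $\xi\alpha^n$ mimic $n\theta$ modulo one along a sparse but positive-density set of indices, simultaneously for every $\theta$ in a Cantor-type set $C$. Then $\xi\alpha^n-n\theta$ is close to $0$ modulo one on that set of indices, and uniform distribution fails.

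\textbf{Choice of $C$ and of the index blocks.} Let $C$ be a set of Liouville-type irrationals $\theta=\sum_k \varepsilon_k/q_k$ with $\varepsilon_k\in\{1,2\}$. Here $q_k$ is a rapidly increasing sequence of integers with $q_{k}\mid q_{k+1}$, for instance $q_{k+1}=q_k^{k+1}\cdot 2$. Then $C$ has cardinality $2^{\aleph_0}$, and each $\theta\in C$ is irrational because its denominators grow too fast. Every $\theta\in C$ satisfies $\|q_k\theta\|\le 3q_k/q_{k+1}$, and this bound is uniform over $C$. Consequently, for $n=q_k m$ with $m$ in a range $m\le M_k$, where $M_k q_k/q_{k+1}\to0$, we get $\|n\theta\|\to0$ uniformly in $\theta\in C$. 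So modulo one, $n\theta$ is essentially $0$ on the set $S_k=\{q_km:\ 1\le m\le M_k\}$. This set, however, has density only $1/q_k$.

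To get positive upper density we use instead the windows $n\in[N_k,2N_k]$ with $N_k$ a multiple of $q_k$ and $N_k\le q_{k+1}/k$. On such a window, $n\theta\equiv (n-N_k)\theta_k+o(1)$ uniformly on $C$, where $\theta_k=\sum_{j\le k}\varepsilon_j/q_j$ is rational with denominator dividing $q_k$. Hence $n\theta$ modulo one takes values in $\frac1{q_k}\Z$ up to $o(1)$.

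\textbf{Construction of $\xi$.} It then suffices to build $\xi$ such that $\|\xi\alpha^n\|\le\delta$ for all $n$ in a set $A$ of positive upper density that is contained in $\bigcup_k [N_k,2N_k]\cap q_k\Z$, adjusted so that the density in the window is positive. More simply, we take $\xi$ with $\|\xi\alpha^n\|\le 1/10$ for all $n\in[N_k,2N_k]$ with $q_k\mid n$. Then $\xi\alpha^n-n\theta$ lies within $1/10+o(1)$ of $\Z$ on a set of relative density $1/q_k$ in the window. That is still density zero, so we must strengthen this to all $n$ in the window: we require $\{\xi\alpha^n\}\approx\{n\theta_k\}$, i.e. $\|\xi\alpha^n-n\theta_k\|\le1/10$ for all $n\in[N_k,2N_k]$. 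Then $\xi\alpha^n-n\theta\in(-1/5,1/5)$ modulo one on half the indices up to $2N_k$, for all $\theta\in C$ simultaneously, provided all $\theta\in C$ share the digits $\varepsilon_1,\dots,\varepsilon_k$.

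This forces a modification: choose $C$ so that the digits that vary are sparse. Let digit $k$ be free only for $k$ in an infinite set $F$, and let the windows be placed at those $k$ at which the preceding digits are fixed. Equivalently, we use windows $k$ with the $\theta_k$ differing over at most $2^{|F\cap[1,k]|}$ values. Handling many $\theta$ at once is then achieved by splitting the window $[N_k,2N_k]$ into $2^{r_k}$ subwindows, one per prefix, each of proportional length. This still yields positive upper density $\ge 2^{-r_k}$, which is not bounded below. The fix is to let $r_k$ grow slowly, but we need a uniform bound. Therefore the cleanest choice is to take free digits only at positions $k\in F$ and to require $\theta$'s prefix to be matched via divisibility, namely $n\theta\equiv n\theta_{k}$ with $\theta_k$ depending only on fixed digits once $N_k<q_{k+1}$. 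In this plan I would let each window use only fixed-digit prefixes, putting the free digit between windows, and accept the resulting uncountable $C$.

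\textbf{Main obstacle.} The heart of the proof is the inductive construction of $\xi$: prescribing $\|\xi\alpha^n-\beta_n\|\le1/10$ for all $n$ in windows $[N_k,2N_k]$ with arbitrary targets $\beta_n$. For general real $\alpha>1$ this follows the classical nested-interval argument used for $\|\xi\alpha^n\|$ control. One shows that the map $\xi\mapsto\xi\alpha^{n}$ stretches intervals by factor $\alpha$. This works directly if $\alpha\ge 5$; for smaller $\alpha$ we replace $n$ by steps of $s$ with $\alpha^s\ge5$ and handle the intermediate indices through the $\alpha^j$-scaled targets. Each step then gives at least two disjoint admissible subintervals, which produces $2^{\aleph_0}$ choices of $\xi$, i.e. the set $\Xi_\alpha$. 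Between windows the targets are unconstrained, and the gaps $N_{k+1}\gg 2N_k$ allow restarting. This step is the one I expect to require the most care, specifically obtaining the density $\ge1/2s$ for small $\alpha$. Finally, Weyl's criterion with the test interval $(-1/5,1/5)$ shows that $\theta\in\spec$ for every $\theta\in C$.
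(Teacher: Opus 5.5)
There is a genuine gap, and it is the one you flag yourself: nothing in your construction gives a proportion of concentrated indices that is bounded below uniformly over an uncountable set of $\theta$. A window with target $n\theta_k$ concentrates $\xi\alpha^n-n\theta$ only for those $\theta\in C$ with prefix $\theta_k$. If $\theta'$ differs from $\theta$ only at a free position $f<k$, then $n\theta_k-n\theta'_k\equiv\pm n/q_f$, and this runs through all of $\frac1{q_f}\Z/\Z$ along the window. Equal subwindows, one per prefix, give density $2^{-r_k}\to0$, as you note.

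Your final fix does not help. Placing the free digits between windows changes nothing, because $\theta_k$ carries every free digit below position $k$; only the finitely many windows with $k<\min F$ avoid this. Matching prefixes via divisibility, i.e.\ using only the $n$ divisible by $q_f$ for the last free $f<k$, leaves a set of relative density $1/q_f\to0$.

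The missing idea, which is the core of the paper, is to give each prefix its \emph{own} block and to order the blocks so that each one is a fixed proportion of the initial segment it ends. The paper numbers the nodes of a binary tree $j=1,2,3,\dots$, with children $2j,2j+1$. Node $j$ gets the block $B_j$ of length $L_j=R^j$, so $\#B_j\ge(1-1/R)N_j$. It also gets an interval $J_j$ of radius $\kappa/L_j$ about a centre $\theta_j$, so that $n|\theta-\theta_j|\le N_j\kappa/L_j\le\kappa R/(R-1)$ for $n\in B_j$ and $\theta\in J_j$. The target on $B_j$ is $n\theta_j$ up to a constant shift. Every $\theta$ in the resulting Cantor set lies in $J_j$ for all nodes $j$ on its branch. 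Hence it sees a proportion at least $1-1/R$ of concentrated indices at the infinitely many times $N=N_j$. Your Liouville set could be treated the same way: one block per prefix, each at least $R$ times longer than all earlier blocks together, with $q_{k+1}$ chosen after the blocks for prefixes of length $k$. But the proposal does not contain this ordering idea.

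The treatment of small $\alpha$ is a second gap. In general you cannot prescribe $\{\xi\alpha^n\}$ within $1/10$ at \emph{all} consecutive indices of a long window. For $\alpha=3/2$, write $\xi\alpha^N=2^LY$; then $\xi\alpha^{N+i}=3^i2^{L-i}Y$, so the vector $(\{\xi\alpha^{N+i}\})_{0\le i\le L}$ depends only on $\{Y\}$. As $\{Y\}$ runs through $[0,1)$, it comes within $1/10$ of at most $4\cdot3^{L+1}$ of the $4^{L+1}$ patterns in $(\frac14\Z/\Z)^{L+1}$. Using only indices divisible by $s$, as your remark about density $1/(2s)$ suggests, is the right move. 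But then the fixed arc $(-\frac15,\frac15)$ of length $\frac25$ is too long, since it must be shorter than the proportion of concentrated indices. So the tolerance must be $O(1/s)$, which needs $\alpha^s$ large compared with $s$, not merely $\alpha^s\ge5$, and $\theta$ must be rescaled. The paper chooses $q$ with $\beta=\alpha^q>2q+1$, so the shadowing error $1/(2(\beta-1))$ is below $1/(4q)$, and applies the shadowing lemma to the base $\beta$. It puts $\theta/q$ rather than $\theta$ in the spectrum, via $\xi\alpha^{qn}-qn\cdot\frac{\theta}{q}=\xi\beta^n-n\theta$. Finally it compares the proportion $3/(4q)$ of concentrated indices with the arc length $2/(3q)$.
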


In \cite{CYZ}, Chen, Ye and Zheng study two variants of the spectrum, in which $(x_n-n\theta)_{n\ge1}$ is required to have only finitely, respectively only countably, many limit values modulo one. For algebraic $\alpha$, they show that both variants of the spectrum of $(\xi\alpha^n)_{n\ge1}$ are contained in $\Q/\Z$, and in particular that they are countable. Thus, these variants behave as Problem~\textup{10.4} predicts, whereas by Theorem~\ref{thm:main} the spectrum itself does not, for any $\alpha>1$.

When $\alpha$ is an integer, one can prescribe the digits of $\xi$, and then a single $\theta$ in the spectrum is easy to find. Our base $\alpha$ need not be an integer, and Lemma~\ref{lem:shadow} replaces the digit expansion. The main difficulty is a different one, since one and the same $\xi$ has to resonate with uncountably many $\theta$ at the same time. We construct the tree of Section~\ref{sec:tree} for this purpose. Moreover, the set $C$ of Theorem~\ref{thm:main} depends on $\alpha$ only through an integer $q$, and we make this precise in Remark~\ref{rem:uniform}.

Our proof of the failure of uniform distribution is direct, and we work with the definition \eqref{eq:ud} rather than with the exponential sums of Weyl's criterion. Lemma~\ref{lem:shadow} controls $\xi\beta^n$ modulo one up to an error of $1/(2(\beta-1))$, and this error has to be small. We therefore apply the lemma to the base $\beta=\alpha^q$ for a suitable integer $q\ge1$, which is $q=1$ when $\alpha>3$ (see Remark~\ref{rem:range}), and we read off the result along the indices divisible by $q$. For infinitely many $N$, a fixed proportion of the points $\{\xi\alpha^m-m\theta\}$ with $1\le m\le N$ lies in a single short arc, centred either at $0$ or at $\tfrac12$, and this proportion exceeds the length of the arc, so that \eqref{eq:ud} fails.

\section{A shadowing lemma}

The main goal of this section is to prove a shadowing lemma for a real base. In an integer base $b$, one can prescribe the digits of $\xi$ so that $\xi b^n\bmod1$ follows any given target sequence up to an error of $1/b$, and this is elementary. However, when $\alpha$ is not an integer, the digits of $\xi$ no longer control $\xi\alpha^n\bmod1$, since $\alpha$ times an integer need not be an integer. Instead, we prescribe an integer approximation to $\xi\alpha^n$ directly, which is the method of Tijdeman in his work on Mahler's $\tfrac32$-problem \cite{Ti} (see also \cite[Theorem~2.14]{Bu}). We round $\alpha(x_n+y_n)-y_{n+1}$ to the nearest integer at each step, which gives the two-sided bound in \eqref{eq:shadow}.

\begin{lemma}\label{lem:shadow}
Let $\alpha>2$ be a real number, let $m\ge1$ be an integer and let $(y_n)_{n\ge0}$ be any sequence in $[0,1)$. We define integers $x_n$ by
\[
x_0=m,\qquad x_{n+1}=\Bigl\lfloor\alpha(x_n+y_n)-y_{n+1}+\tfrac12\Bigr\rfloor\quad(n\ge0).
\]
Then, the limit $\xi:=\lim_{n\to\infty}x_n\alpha^{-n}$ exists and satisfies
\begin{equation}\label{eq:shadow}
\bigl\|\xi\alpha^n-y_n\bigr\|\ \le\ \frac1{2(\alpha-1)}\qquad\text{for every }n\ge0,
\end{equation}
and moreover $\bigl|\xi-(m+y_0)\bigr|\le\tfrac1{2(\alpha-1)}$. In particular, $\xi>m-\tfrac1{2(\alpha-1)}>0$.
\end{lemma}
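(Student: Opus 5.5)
The plan is to track the error sequence. Define $z_n:=x_n+y_n$. By the rounding rule, $x_{n+1}$ is the nearest integer to $\alpha z_n-y_{n+1}$, so
\[
\varepsilon_{n+1}:=z_{n+1}-\alpha z_n = x_{n+1}-\bigl(\alpha(x_n+y_n)-y_{n+1}\bigr)
\]
satisfies $\varepsilon_{n+1}\in(-\tfrac12,\tfrac12]$, hence $|\varepsilon_{n+1}|\le\tfrac12$ for every $n\ge0$.

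Dividing by $\alpha^{n+1}$ gives the telescoping identity $z_{n+1}\alpha^{-(n+1)}-z_n\alpha^{-n}=\varepsilon_{n+1}\alpha^{-(n+1)}$. The increments are therefore bounded by $\tfrac12\alpha^{-(n+1)}$, so $z_n\alpha^{-n}$ is Cauchy and converges. Since $y_n\alpha^{-n}\to0$, the sequence $x_n\alpha^{-n}$ has the same limit, and this defines $\xi$. We also obtain the exact expansion
\[
\xi=z_n\alpha^{-n}+\sum_{k>n}\varepsilon_k\alpha^{-k}.
\]

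Multiplying by $\alpha^n$ gives
\[
\xi\alpha^n-y_n=x_n+\sum_{j\ge1}\varepsilon_{n+j}\alpha^{-j}.
\]
The tail is bounded in absolute value by $\tfrac12\sum_{j\ge1}\alpha^{-j}=\frac{1}{2(\alpha-1)}$. Since $x_n\in\Z$, the distance to the nearest integer satisfies $\|\xi\alpha^n-y_n\|\le\frac1{2(\alpha-1)}$, which is \eqref{eq:shadow}. Taking $n=0$ gives $z_0=m+y_0$ and the same tail estimate, so $|\xi-(m+y_0)|\le\frac1{2(\alpha-1)}$. Then $y_0\ge0$ yields $\xi\ge m-\frac1{2(\alpha-1)}$, and this is positive because $\alpha>2$ gives $\frac1{2(\alpha-1)}<\tfrac12<1\le m$.

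The hypothesis $\alpha>2$ is used only in this last positivity step. Note, however, that the lemma claims the strict inequality $\xi>m-\frac1{2(\alpha-1)}$. Strictness would need $|\varepsilon_k|<\tfrac12$ for some $k$, or $y_0>0$. Otherwise one still has the non-strict inequality together with positivity, and that suffices for $\xi>0$.

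There is no real obstacle here. The only points needing care are the half-open rounding convention, which gives $|\varepsilon|\le\tfrac12$, and passing from the limit of $z_n\alpha^{-n}$ to the limit of $x_n\alpha^{-n}$.
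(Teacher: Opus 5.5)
Your argument is the paper's proof essentially line for line: the same $z_n=x_n+y_n$, the same rounding errors $\varepsilon_k\in(-\tfrac12,\tfrac12]$, the same telescoping to $\xi=z_0+\sum_{k\ge1}\varepsilon_k\alpha^{-k}$, and the same tail $\sum_{j\ge1}\varepsilon_{n+j}\alpha^{-j}$ bounded by $\frac1{2(\alpha-1)}$.

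The one shortfall is the strict inequality $\xi>m-\frac1{2(\alpha-1)}$. You leave it open, and your caveat about it is mistaken. You recorded that $\varepsilon_k\in(-\tfrac12,\tfrac12]$ but then used only $|\varepsilon_k|\le\tfrac12$. The half-open convention gives $\varepsilon_k>-\tfrac12$ for \emph{every} $k$, so
\[
\xi-(m+y_0)+\frac1{2(\alpha-1)}=\sum_{k\ge1}\Bigl(\varepsilon_k+\tfrac12\Bigr)\alpha^{-k}>0,
\]
because every term is strictly positive. Hence $\xi>m+y_0-\frac1{2(\alpha-1)}\ge m-\frac1{2(\alpha-1)}$ holds unconditionally, and this is exactly how the paper concludes.

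Your proposed criterion ``$|\varepsilon_k|<\tfrac12$ for some $k$'' is also not the relevant one. If $|\varepsilon_k|=\tfrac12$ for all $k$, the convention forces $\varepsilon_k=+\tfrac12$, which puts $\xi$ at the opposite end, $m+y_0+\frac1{2(\alpha-1)}$. Equality in the lower bound would need $\varepsilon_k=-\tfrac12$ for all $k$, and that can never happen.
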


\begin{proof}
Put $z_n:=x_n+y_n$ and $e_{n+1}:=z_{n+1}-\alpha z_n$. As $\lfloor t\rfloor$ is the unique integer in $(t-1,t]$, we have $\lfloor t+\tfrac12\rfloor-t\in(-\tfrac12,\tfrac12]$ for every real number $t$. Applying this with $t=\alpha z_n-y_{n+1}$, for which $x_{n+1}=\lfloor t+\tfrac12\rfloor$, we get
\begin{equation}\label{eq:delta}
e_{n+1}=x_{n+1}+y_{n+1}-\alpha z_n=\lfloor t+\tfrac12\rfloor-t\in\bigl(-\tfrac12,\tfrac12\bigr] .
\end{equation}
Dividing $z_{n+1}=\alpha z_n+e_{n+1}$ by $\alpha^{n+1}$ yields $z_{n+1}\alpha^{-(n+1)}=z_n\alpha^{-n}+e_{n+1}\alpha^{-(n+1)}$, and summing this telescopically over $0\le n<k$ we obtain
\begin{equation}\label{eq:xk}
z_k\alpha^{-k}=z_0+\sum_{j=1}^k e_j\alpha^{-j}\qquad(k\ge0).
\end{equation}
By \eqref{eq:delta} we have $|e_j\alpha^{-j}|\le\tfrac12\alpha^{-j}$, and hence the series $\sum_je_j\alpha^{-j}$ converges absolutely. Thus, $\xi:=z_0+\sum_{j\ge1}e_j\alpha^{-j}$ is well defined, and $z_k\alpha^{-k}\to\xi$ by \eqref{eq:xk}. Since $0\le y_k<1$, we also have $x_k\alpha^{-k}=z_k\alpha^{-k}-y_k\alpha^{-k}\to\xi$. Multiplying $\xi$ by $\alpha^k$ and isolating the first $k$ terms using \eqref{eq:xk}, we get
\[
\xi\alpha^k=\alpha^k\Bigl(z_0+\sum_{j=1}^ke_j\alpha^{-j}\Bigr)+\sum_{j>k}e_j\alpha^{k-j}=z_k+T_k,\qquad T_k:=\sum_{i\ge1}e_{k+i}\alpha^{-i}.
\]
By \eqref{eq:delta} and $\sum_{i\ge1}\alpha^{-i}=\tfrac1{\alpha-1}$, we have
\begin{equation}\label{eq:Tk}
|T_k|\ \le\ \frac12\sum_{i\ge1}\alpha^{-i}\ =\ \frac1{2(\alpha-1)}\qquad(k\ge0).
\end{equation}
Since $x_k\in\Z$, the numbers $\xi\alpha^k-y_k$ and $T_k=\xi\alpha^k-x_k-y_k$ differ by an integer, and therefore
\[
\|\xi\alpha^k-y_k\|=\|T_k\|\le|T_k|\le\frac1{2(\alpha-1)},
\]
which proves \eqref{eq:shadow}. Taking $k=0$, we get $\xi=z_0+T_0=m+y_0+T_0$, and \eqref{eq:Tk} yields $|\xi-(m+y_0)|\le\tfrac1{2(\alpha-1)}$. Moreover, every $e_j$ is strictly larger than $-\tfrac12$ by \eqref{eq:delta}, so that $T_0>-\tfrac1{2(\alpha-1)}$ and $\xi>m+y_0-\tfrac1{2(\alpha-1)}\ge m-\tfrac1{2(\alpha-1)}$. Finally, $\alpha>2$ implies $\tfrac1{2(\alpha-1)}<\tfrac12<1\le m$, and hence $m-\tfrac1{2(\alpha-1)}>0$.
\end{proof}

\section{Resonant frequencies}\label{sec:tree}

In this section, we construct a set $C$ from which the set of Theorem~\ref{thm:main} is obtained in Section~\ref{sec:proof}. Our strategy is to build a Cantor-type set from nested intervals $J_j$ with centres $\theta_j$. We partition the indices $n\ge1$ into blocks $B_j$ of rapidly increasing length, and we choose the radius of $J_j$ inversely proportional to the length of $B_j$, so that $n(\theta-\theta_j)$ stays small for all $n\in B_j$ and all $\theta\in J_j$.

Throughout, we fix an integer $R\ge3$ and a real number $\kappa$ with $0<\kappa\le\tfrac14$. These parameters govern the growth of the blocks and the size of the radii, and they will be chosen in \eqref{eq:param}, in terms of the integer $q$ of \eqref{eq:q}. We set $L_j=R^j$ for $j\ge1$, we put $N_0=0$ and $N_j=L_1+\cdots+L_j$, and we let
\[
B_j=\{n\in\Z:N_{j-1}<n\le N_j\}.
\]
Then, the blocks $B_1,B_2,\dots$ partition $\{1,2,3,\dots\}$ and $\#B_j=L_j$. Summing the geometric series, we get $(R-1)N_j=R^{j+1}-R<RL_j$, and therefore
\begin{equation}\label{eq:blocks}
N_j\le\frac{R}{R-1}\,L_j\qquad(j\ge1).
\end{equation}

We index the nodes of the infinite rooted binary tree by $1,2,3,\dots$, where the children of $j$ are $2j$ and $2j+1$, and we assign to the node $j$ the radius $r_j:=\kappa/L_j$. Then, \eqref{eq:blocks} shows that the drift within a block is bounded independently of $j$, namely
\begin{equation}\label{eq:drift}
N_jr_j\ \le\ \frac{\kappa R}{R-1}\qquad(j\ge1).
\end{equation}
Finally, we set the centre of the root to $\theta_1:=\tfrac12$, and we obtain the centres of the two children of a node by shifting its centre to the left and to the right by half of its radius. That is, we define
\[
\theta_{2j}:=\theta_j-\tfrac{r_j}2,\qquad \theta_{2j+1}:=\theta_j+\tfrac{r_j}2,\qquad J_j:=[\theta_j-r_j,\theta_j+r_j].
\]

\begin{lemma}\label{lem:tree}
For every $j\ge1$, the intervals $J_{2j}$ and $J_{2j+1}$ are disjoint subintervals of $J_j$. Consequently, we have $J_j\subseteq J_1\subset(0,1)$ for every $j\ge1$.
\end{lemma}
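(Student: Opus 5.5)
We reduce everything to comparing radii of parent and child. The children of $j$ are $2j$ and $2j+1$, with radii $r_{2j}=\kappa/L_{2j}$ and $r_{2j+1}=\kappa/L_{2j+1}$. Since $L_i=R^i$ is increasing in $i$ and $2j>j$, we have
\[
r_{2j+1}<r_{2j}=\kappa R^{-2j}\le\kappa R^{-j-1}=r_j/R\le r_j/3,
\]
because $2j\ge j+1$ and $R\ge3$. Thus each child's radius is at most $r_j/3$.

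For containment, $J_{2j}$ is centred at $\theta_j-r_j/2$ with radius at most $r_j/3$. It therefore lies in
\[
\bigl[\theta_j-\tfrac{5}{6}r_j,\ \theta_j-\tfrac16 r_j\bigr]\subseteq J_j .
\]
Symmetrically, $J_{2j+1}\subseteq[\theta_j+\tfrac16 r_j,\theta_j+\tfrac56 r_j]\subseteq J_j$. These two ranges are separated by the gap $(\theta_j-\tfrac16r_j,\theta_j+\tfrac16r_j)$, which is nonempty because $r_j>0$. Hence $J_{2j}\cap J_{2j+1}=\emptyset$.

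For the consequence, we induct on $j$ along the tree: every $j\ge2$ is a child of $\lfloor j/2\rfloor<j$. So $J_j\subseteq J_{\lfloor j/2\rfloor}\subseteq J_1$ by the first part and the induction hypothesis. Finally,
\[
J_1=[\tfrac12-\kappa/R,\ \tfrac12+\kappa/R],
\]
and $\kappa/R\le\tfrac1{12}<\tfrac12$, so $J_1\subset(0,1)$.

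No step is a real obstacle. The only point needing care is the inequality $r_{2j}\le r_j/R$, which uses $2j\ge j+1$ and not merely the monotonicity of $L$.
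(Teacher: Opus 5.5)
Your proof is correct and follows the paper's argument essentially step for step: you bound the children's radii by $r_j/R\le r_j/3$ using $2j\ge j+1$, place $J_{2j}$ and $J_{2j+1}$ on opposite sides of $\theta_j$ inside $J_j$, induct along the chain of ancestors, and use $r_1=\kappa/R\le\tfrac1{12}$. The only cosmetic difference is that you make the separation explicit through the gap $(\theta_j-\tfrac16r_j,\theta_j+\tfrac16r_j)$, whereas the paper places the two children in the open halves $(\theta_j-r_j,\theta_j)$ and $(\theta_j,\theta_j+r_j)$.
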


\begin{proof}
Since $2j,2j+1\ge j+1$, we have $L_{2j},L_{2j+1}\ge L_{j+1}=RL_j$, and thus $r_{2j},r_{2j+1}\le r_j/R\le r_j/3$. The interval $J_{2j}$ of the left child is centred at $\theta_j-\tfrac{r_j}2$ and has radius $r_{2j}<\tfrac{r_j}2$, and hence
\[
J_{2j}=\Bigl[\theta_j-\frac{r_j}2-r_{2j},\ \theta_j-\frac{r_j}2+r_{2j}\Bigr]\subset(\theta_j-r_j,\ \theta_j).
\]
In the same way, we obtain $J_{2j+1}\subset(\theta_j,\theta_j+r_j)$. Thus, one of these two intervals lies in the left half of $J_j$ and the other one in the right half, and so $J_{2j}$ and $J_{2j+1}$ are disjoint subsets of $J_j$. The last statement follows by induction along the chain of ancestors of $j$. Finally, $\kappa\le\tfrac14$ and $R\ge3$ give $r_1=\kappa/R\le\tfrac1{12}$, so that $J_1\subseteq[\tfrac12-\tfrac1{12},\tfrac12+\tfrac1{12}]\subset(0,1)$.
\end{proof}

\begin{lemma}\label{lem:cantor}
There is an uncountable set $C\subset(0,1)$ such that all its elements are irrational and every $\theta\in C$ lies in $J_j$ for infinitely many $j$.
\end{lemma}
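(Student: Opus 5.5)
The plan is to take $C$ to be the set of points determined by infinite descending paths in the binary tree. To each infinite sequence $\omega=(\omega_1,\omega_2,\dots)\in\{0,1\}^{\N}$ we associate the path of nodes $j_0(\omega)=1$ and $j_{k+1}(\omega)=2j_k(\omega)+\omega_{k+1}$. By Lemma~\ref{lem:tree}, the intervals $J_{j_0(\omega)}\supseteq J_{j_1(\omega)}\supseteq\cdots$ form a nested sequence of nonempty compact intervals. Their radii $r_{j_k}=\kappa/L_{j_k}$ tend to $0$, so the intersection is a single point $\theta(\omega)$. By construction, $\theta(\omega)$ lies in $J_j$ for the infinitely many nodes $j=j_k(\omega)$, and $\theta(\omega)\in J_1\subset(0,1)$.

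Next we show that $\omega\mapsto\theta(\omega)$ is injective. Suppose $\omega\ne\omega'$, and let $k$ be the first index with $\omega_{k+1}\ne\omega'_{k+1}$. Then both paths pass through the common node $j=j_k(\omega)=j_k(\omega')$. One path continues into $J_{2j}$ and the other into $J_{2j+1}$, and these intervals are disjoint by Lemma~\ref{lem:tree}. Hence $\theta(\omega)\ne\theta(\omega')$, and the image $C_0:=\theta(\{0,1\}^{\N})$ has cardinality $2^{\aleph_0}$.

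Finally, we put $C:=C_0\setminus\Q$. Since $\Q$ is countable and $C_0$ is uncountable, $C$ is still uncountable. It consists of irrational points of $(0,1)$, each lying in $J_j$ for infinitely many $j$.

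There is no real obstacle here. The only point needing care is the injectivity step, where we must make sure that disjointness of sibling intervals propagates down the tree. That follows directly from the nesting in Lemma~\ref{lem:tree}.
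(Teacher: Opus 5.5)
Your proof is correct and follows the paper's argument: you take branches of the binary tree, get injectivity from the disjointness of sibling intervals in Lemma~\ref{lem:tree}, and remove the countably many rationals. The one difference is in how $\theta(\omega)$ is obtained. You take it as the unique point of the nested compact intervals $J_{j_k(\omega)}$, using the nesting from Lemma~\ref{lem:tree} and $r_{j_k}\to0$. The paper instead writes $\theta(\omega)$ as an explicit series of centre shifts and checks membership in each $J_{n_K(\omega)}$ by a tail estimate. Your route is slightly shorter because it reuses the nesting already proved.
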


\begin{proof}
Let $\omega=(\omega_k)_{k\ge0}\in\{0,1\}^{\N}$. We define the nodes along a branch by setting $n_0(\omega)=1$ and by moving to the left or to the right child at each step, that is,
\[
n_{k+1}(\omega)=2n_k(\omega)+\omega_k .
\]
We also put $\sigma_k=+1$ if $\omega_k=1$, and $\sigma_k=-1$ if $\omega_k=0$. By the recursive definition of the centres, the centre of the $K$-th interval along this branch is the partial sum
\begin{equation}\label{eq:partial}
\theta_{n_K(\omega)}=\tfrac12+\sum_{k<K}\sigma_k\,\frac{r_{n_k(\omega)}}2\qquad(K\ge0).
\end{equation}
Since $n_k\ge1$ and $n_0=1$, we get $n_{k+1}\ge2n_k\ge n_k+1$, and hence by induction
\[
n_k\ge k+1\qquad\text{and}\qquad n_{K+i}\ge n_K+i .
\]
Recalling that $r_j=\kappa/R^j$, we deduce
\begin{equation}\label{eq:radii}
r_{n_k}\le\frac{\kappa}{R^{\,k+1}}\qquad\text{and}\qquad r_{n_{K+i}}\le\frac{\kappa}{R^{\,n_K+i}}=\frac{r_{n_K}}{R^{\,i}}.
\end{equation}
By the first bound in \eqref{eq:radii}, the series
\[
\theta(\omega):=\tfrac12+\sum_{k\ge0}\sigma_k\,\frac{r_{n_k(\omega)}}2
\]
converges absolutely. To see that $\theta(\omega)$ lies in every interval along its branch, we subtract \eqref{eq:partial} from this series, which leaves the tail $\theta(\omega)-\theta_{n_K(\omega)}=\sum_{i\ge0}\sigma_{K+i}\,r_{n_{K+i}}/2$. As $|\sigma_k|=1$, the second bound in \eqref{eq:radii} and $R/(R-1)\le\tfrac32$, which holds since $R\ge3$, give
\[
\bigl|\theta(\omega)-\theta_{n_K(\omega)}\bigr|\le\sum_{i\ge0}\frac{r_{n_{K+i}}}2\le\frac{r_{n_K}}2\sum_{i\ge0}\frac1{R^{\,i}}=\frac{r_{n_K}}2\cdot\frac{R}{R-1}\le\frac34\,r_{n_K}<r_{n_K}.
\]
Therefore, $\theta(\omega)\in J_{n_K(\omega)}$ for every $K\ge0$, and since the indices $n_K(\omega)$ are strictly increasing, $\theta(\omega)$ lies in infinitely many $J_j$.

Next, we show that the map $\omega\mapsto\theta(\omega)$ is injective. Let $\omega\ne\omega'$, and let $\ell$ be the first index where they differ. The two branches share the same nodes up to $\ell$, so that $n_k(\omega)=n_k(\omega')$ for $k\le\ell$, whereas $n_{\ell+1}(\omega)$ and $n_{\ell+1}(\omega')$ are the two distinct children $2n_\ell$ and $2n_\ell+1$. By Lemma~\ref{lem:tree}, the corresponding intervals are disjoint, and they contain $\theta(\omega)$ and $\theta(\omega')$ respectively. Hence, $\theta(\omega)\ne\theta(\omega')$, and the range of $\theta$ has the cardinality $2^{\aleph_0}$ of $\{0,1\}^{\N}$. Now, we set
\[
C:=\bigl\{\theta(\omega):\omega\in\{0,1\}^{\N}\bigr\}\setminus\Q.
\]
Since $\Q$ is countable, $C$ is uncountable. All of its elements are irrational, they lie in $J_1\subset(0,1)$ by Lemma~\ref{lem:tree}, and each of them lies in infinitely many $J_j$ by the first part of the proof.
\end{proof}

\section{Proof of Theorem \ref{thm:main}}\label{sec:proof}

Let $\alpha>1$ be a real number. We choose an integer $q\ge1$ with
\begin{equation}\label{eq:q}
\beta:=\alpha^q>2q+1,
\end{equation}
which is possible because $\alpha^q/q\to\infty$ as $q\to\infty$, and we put $E:=1/(2(\beta-1))$. By \eqref{eq:q}, we have $\beta>3$ and $2(\beta-1)>4q$, that is,
\begin{equation}\label{eq:E}
E<\frac1{4q}\le\frac14 .
\end{equation}
Now, we choose
\begin{equation}\label{eq:param}
R:=4,\qquad \kappa:=\frac1{16q}.
\end{equation}
These are the parameters of Section~\ref{sec:tree}, and they are admissible there, since $R\ge3$ and $\kappa\le\tfrac1{16}\le\tfrac14$. We also write
\[
D:=\frac{\kappa R}{R-1}=\frac1{12q},\qquad \rho:=E+D .
\]
Here, $D$ is the drift bound of \eqref{eq:drift}, and $\rho$ will be the radius of the arcs below. By \eqref{eq:E}, we have
\begin{equation}\label{eq:budget}
\rho\ <\ \frac1{4q}+\frac1{12q}\ =\ \frac1{3q}.
\end{equation}
Finally, let $C$ be the set of Lemma~\ref{lem:cantor} for these $R$ and $\kappa$.

Put $s_0:=0$, $s_1:=\tfrac12$ and $P:=\{0,1\}^{\N}$. For a parameter $c=(c_i)_{i\ge0}\in P$, we define a target sequence by
\[
y^c_0:=0,\qquad y^c_n:=\bigl\{n\theta_j+s_{c_{j-1}}\bigr\}\in[0,1)\quad\text{for }n\in B_j\ (j\ge1).
\]
The value $y^c_0$ is the same for every $c$ and plays no further role. We only specify it because Lemma~\ref{lem:shadow} needs a sequence indexed by $n\ge0$. Applying Lemma~\ref{lem:shadow} to the base $\beta>2$ with $m=1$, we obtain a real number $\xi_c>0$ such that
\begin{equation}\label{eq:close}
\bigl\|\xi_c\beta^n-y^c_n\bigr\|\le E\qquad(n\ge1).
\end{equation}

Now, let $j\ge1$, let $\theta\in J_j$ and let $n\in B_j$. We write $s:=s_{c_{j-1}}$. Since $y^c_n\equiv n\theta_j+s\pmod1$, we have
\[
y^c_n-n\theta-s\ \equiv\ n(\theta_j-\theta)\pmod1 .
\]
As $\theta\in J_j$ and $n\le N_j$, we have $\bigl|n(\theta_j-\theta)\bigr|\le N_jr_j\le D$ by \eqref{eq:drift}. In particular $\bigl\|y^c_n-n\theta-s\bigr\|\le D$, and combining this with \eqref{eq:close} through
\[
\bigl\|\xi_c\beta^n-n\theta-s\bigr\|\le\bigl\|\xi_c\beta^n-y^c_n\bigr\|+\bigl\|y^c_n-n\theta-s\bigr\|
\]
we obtain
\begin{equation}\label{eq:conc}
\bigl\|\xi_c\beta^n-n\theta-s\bigr\|\ \le\ E+D\ =\ \rho .
\end{equation}
Thus, every one of the $L_j$ points $\xi_c\beta^n-n\theta$ of the block $B_j$ lies within $\rho$ of $s$ on the circle.

Next, we pass from the base $\beta$ to the base $\alpha$. Let $\theta\in C$, put $\theta':=\theta/q$, and put
\[
u_m:=\xi_c\alpha^m-m\theta'\qquad(m\ge1).
\]
Then, $\theta'$ is irrational and lies in $(0,1)$, and since $\alpha^{qn}=\beta^n$ and $qn\theta'=n\theta$, we have $u_{qn}=\xi_c\beta^n-n\theta$ for every $n\ge1$. Let $j$ be an index with $\theta\in J_j$, of which there are infinitely many by Lemma~\ref{lem:cantor}, and let $s:=s_{c_{j-1}}$. The $L_j$ indices $m=qn$ with $n\in B_j$ are distinct, they satisfy $m\le qN_j$, and $\|u_m-s\|\le\rho$ for each of them by \eqref{eq:conc}. Hence, by \eqref{eq:blocks} and \eqref{eq:param},
\begin{equation}\label{eq:count}
\#\bigl\{1\le m\le qN_j:\ \|u_m-s\|\le\rho\bigr\}\ \ge\ L_j\ \ge\ \Bigl(1-\frac1R\Bigr)N_j\ =\ \frac3{4q}\cdot qN_j .
\end{equation}
As $c_{j-1}\in\{0,1\}$, one of the two values $s_0$ and $s_1$ occurs as $s$ for infinitely many of these $j$, and we fix such a value $s$. Put $\rho':=1/(3q)$, so that $\rho<\rho'$ by \eqref{eq:budget}. The closed arc $\{t:\|t-s\|\le\rho\}$ is contained in the half-open arc $[s-\rho',s+\rho')$ taken modulo one, which is the union of at most two disjoint intervals of the form $[a,b)$ with $0\le a<b\le1$, of total length $2\rho'$, since $\rho'<\tfrac12$. If $(u_m)_{m\ge1}$ were u.d.\ mod~1, then, applying \eqref{eq:ud} to each of these intervals and adding, the proportion of the indices $1\le m\le N$ with $\{u_m\}$ in this arc would tend to $2\rho'=\tfrac2{3q}$ as $N\to\infty$, whereas by \eqref{eq:count} it is at least $\tfrac3{4q}$ for the infinitely many $N=qN_j$ above. Hence, $(u_m)_{m\ge1}=(\xi_c\alpha^m-m\theta')_{m\ge1}$ is not u.d.\ mod~1, that is, $\theta'\in\spec((\xi_c\alpha^m)_{m\ge1})$. Since $\theta\mapsto\theta/q$ is injective, the set
\[
C':=\bigl\{\theta/q:\theta\in C\bigr\}
\]
is uncountable, all of its elements are irrational numbers in $(0,1)$, and we have shown that
\begin{equation}\label{eq:incl}
C'\subseteq\spec\bigl((\xi_c\alpha^n)_{n\ge1}\bigr)\qquad\text{for every }c\in P .
\end{equation}

It remains to show that distinct parameters give distinct $\xi$. Suppose that $c\ne c'$, and let $i$ be an index with $c_i\ne c'_i$. Put $j=i+1$, so that the block $B_j$ uses the coordinate $c_{j-1}=c_i$, and take any $n\in B_j$, which exists because $\#B_j=L_j\ge4$. Since $y^c_n\equiv n\theta_j+s_{c_i}$ and $y^{c'}_n\equiv n\theta_j+s_{c'_i}$ modulo one, the difference $y^c_n-y^{c'}_n$ is congruent to $\pm(s_1-s_0)=\pm\tfrac12$ modulo one, and so
\[
\bigl\|y^c_n-y^{c'}_n\bigr\|=\tfrac12 .
\]
Suppose that $\xi_c=\xi_{c'}$. Then, \eqref{eq:close}, the subadditivity of $\|\cdot\|$ and \eqref{eq:E} give
\[
\tfrac12=\bigl\|y^c_n-y^{c'}_n\bigr\|\le\bigl\|y^c_n-\xi_c\beta^n\bigr\|+\bigl\|\xi_{c'}\beta^n-y^{c'}_n\bigr\|\le2E<\tfrac12 ,
\]
which is false. Hence, the map $c\mapsto\xi_c$ is injective, and the set
\[
\Xi_\alpha:=\bigl\{\xi_c:c\in P\bigr\}
\]
has cardinality $2^{\aleph_0}$. Together with \eqref{eq:incl}, this proves Theorem~\ref{thm:main}, with $C'$ in the role of $C$. \qed

\section{Remarks}

\begin{remark}\label{rem:uniform}
The set $C'$ depends on $\alpha$ only through the exponent $q$ of \eqref{eq:q}. Indeed, the sets $C$ and $C'$ are built from $R$, $\kappa$ and $q$ alone, and the estimates of Section~\ref{sec:proof} use the base $\beta=\alpha^q$ only through \eqref{eq:E}, which holds as soon as $\alpha^q>2q+1$. Thus, one and the same set $C'$ works for all $\alpha$ with $\alpha^q>2q+1$. In particular, for $\alpha>3$ we may take $q=1$, and then one and the same $C$ works for every $\alpha>3$.
\end{remark}

\begin{remark}\label{rem:range}
The only reason for passing to the power $\beta=\alpha^q$ is the shadowing error $E=1/(2(\beta-1))$ of Lemma~\ref{lem:shadow}. The drift $D=\kappa R/(R-1)$ can be made as small as we please by decreasing $\kappa$, but $E$ depends on the base alone, and the separation of $2^{\aleph_0}$ many $\xi$ at the end of Section~\ref{sec:proof} needs $2E<\tfrac12$. Indeed, two points of the circle differ by at most $\tfrac12$ in $\|\cdot\|$, so that \eqref{eq:close} alone does not exclude a common value of $\xi$ for two targets that differ by $\tfrac12$, unless $2E<\tfrac12$. For the base $\alpha$ itself, this means $\alpha>3$, and passing to $\alpha^q$ makes the error as small as we need. We have not tried to improve the constant in Lemma~\ref{lem:shadow}.
\end{remark}

\begin{remark}
The counterexamples lie densely in $(0,\infty)$. Since the index $n=0$ lies in no block, we may run the proof with $x_0=m$ and an arbitrary $y_0\in[0,1)$ in Lemma~\ref{lem:shadow}, instead of $x_0=1$ and $y_0=0$, and nothing changes. Given a real number $t\ge1$, the choice $m=\lfloor t\rfloor$ and $y_0=\{t\}$ gives $m+y_0=t$, and Lemma~\ref{lem:shadow} then yields $|\xi_c-t|\le E$ for every $c\in P$. Thus, with $E=1/(2(\alpha^q-1))$, the set $S$ of those $\xi>0$ for which $\spec((\xi\alpha^n)_{n\ge1})$ is uncountable meets every closed interval of length $2E$ in $[1,\infty)$. Moreover, $\spec((\alpha^{-k}\eta\,\alpha^n)_{n\ge1})=\spec((\eta\alpha^n)_{n\ge1})$ for every $\eta>0$ and every integer $k\ge0$, because $\alpha^{-k}\eta\,\alpha^n=\eta\alpha^{n-k}$, so that the two sequences $(\alpha^{-k}\eta\,\alpha^n-n\theta)_{n\ge1}$ and $(\eta\alpha^n-n\theta)_{n\ge1}$ differ by a shift of the index by $k$ and a translation by the constant $-k\theta$, neither of which affects uniform distribution. Hence, $\alpha^{-k}S=S$ for every $k\ge0$. Given $t>0$ and $\varepsilon>0$, we choose $k$ with $\alpha^kt\ge1$ and $\alpha^{-k}E<\varepsilon$, and we take $\xi\in S$ with $|\xi-\alpha^kt|\le E$. Then $\alpha^{-k}\xi\in S$ and $|\alpha^{-k}\xi-t|\le\alpha^{-k}E<\varepsilon$. Thus, $S$ is dense in $(0,\infty)$, although it is a Lebesgue null set.
\end{remark}

\begin{remark}
The set $C$ has Hausdorff dimension zero, and so has $C'=C/q$. Indeed, along a branch we have $n_{k+1}\ge2n_k$ and $n_0=1$, and so $n_K\ge2^K$. Thus, the $K$-th level of the construction covers $C$ by $2^K$ intervals of length at most $2\kappa R^{-2^K}$, and for every $d>0$ the sum of the $d$-th powers of these lengths is at most $2^K\bigl(2\kappa R^{-2^K}\bigr)^d$, which tends to $0$ as $K\to\infty$. This is no accident. Mend\`es France states in \cite[Probl\`eme~5]{MF}, with reference to Wallin \cite{Wa}, that the spectrum of every real sequence has Hausdorff dimension zero. Thus, a spectrum is always small in dimension, and Theorem~\ref{thm:main} shows that it can nevertheless be uncountable.
\end{remark}

\section*{Acknowledgements}

I thank Faruk Temur for drawing my attention to this problem, for encouraging me to work on it, and for reading the proofs and making valuable comments.

\end{document}